\newtheorem{thm}{Theorem}[section]
\newtheorem{lem}[thm]{Lemma}
\theoremstyle{definition}
\newcommand{\scr}[1]{\mathscr #1}
\definecolor{wco}{rgb}{0.5,0.2,0.3}
\numberwithin{equation}{section} \theoremstyle{remark}
\newcommand{\ua}{\uparrow}
\title{{\bf Gradient Estimate   on the Neumann  Semigroup  and Applications}\footnote{Supported in
 part by WIMCS and  NNSFC(10721091, 10771221, 10925106).}
}
\author{
{\bf Feng-Yu Wang$^{a),b)}$ and Lixin Yan$^{c)}$}\\
\footnotesize{$^{a)}$ School of Math. Sci. and Lab. Math. Com. Sys.,
Beijing Normal
University, Beijing 100875, China}\\
 \footnotesize{$^{b)}$ Department of Mathematics,
Swansea University, Singleton Park, SA2 8PP, UK}\\
\footnotesize{Email: wangfy@bnu.edu.cn;
F.Y.Wang@swansea.ac.uk}\\
\footnotesize{$^{c)}$ Department of Mathematics, Sun Yat-sen (Zhongshan) University, Guangzhou, 510275, China} \\
\footnotesize{Email: mcsylx@mail.sysu.edu.cn}\\
}
\begin{document}
\def\R{\mathbb R}  \def\ff{\frac} \def\ss{\sqrt} \def\B{\mathbf
B}
\def\N{\mathbb N} \def\kk{\kappa} \def\m{{\bf m}}
\def\dd{\delta} \def\DD{\Delta} \def\vv{\varepsilon} \def\rr{\rho}
\def\<{\langle} \def\>{\rangle} \def\GG{\Gamma} \def\gg{\gamma}
  \def\nn{\nabla} \def\pp{\partial} \def\EE{\scr E}
\def\d{\text{\rm{d}}} \def\bb{\beta} \def\aa{\alpha} \def\D{\scr D}
  \def\si{\sigma} \def\ess{\text{\rm{ess}}}
\def\beg{\begin} \def\beq{\begin{equation}}  \def\F{\scr F}
\def\Ric{\text{\rm{Ric}}} \def\Hess{\text{\rm{Hess}}}
\def\e{\text{\rm{e}}} \def\ua{\underline a} \def\OO{\Omega}  \def\oo{\omega}
 \def\tt{\tilde} \def\Ric{\text{\rm{Ric}}}
\def\cut{\text{\rm{cut}}} \def\P{\mathbb P} \def\ifn{I_n(f^{\bigotimes n})}
\def\C{\mathbb C}      \def\aaa{\mathbf{r}}     \def\r{r}
\def\gap{\text{\rm{gap}}} \def\prr{\pi_{{\bf m},\varrho}}  \def\r{\mathbf r}
\def\Z{\mathbb Z} \def\vrr{\varrho} \def\ll{\lambda}
\def\L{\scr L}\def\Tt{\tt} \def\TT{\tt}\def\II{\mathbb I}
\def\i{{\rm in}}\def\Sect{{\rm Sect}}\def\E{\mathbb E} \def\H{\mathbb H}
\def\M{\scr M}\def\Q{\mathbb Q} \def\texto{\text{o}} \def\LL{\Lambda}
\def\Rank{{\rm Rank}} \def\B{\scr B}
\def\T{\mathbb T}\def\i{{\rm i}} \def\ZZ{\hat\Z}\def\RN{\mathbb R^d}

\maketitle
\begin{abstract} We prove  the following sharp   upper bound for the 
gradient of the Neumann semigroup $P_t$ on a $d$-dimensional compact 
domain $\OO$ with boundary either   $C^2$-smooth or convex:

$$\|\nn P_t\|_{1\to \infty}\le \ff{c}{t^{(d+1)/2}},\ \ t>0,$$ 
where $c>0$ is a constant depending on the domain and
$\|\cdot\|_{1\to\infty}$ is the operator norm from $L^1(\OO)$ to $L^\infty(\OO)$.
This estimate implies a Gaussian type point-wise upper bound 
for the gradient of the Neumann heat kernel, which is applied to the study of the Hardy spaces,
Riesz transforms, and  regularity
 of solutions to the inhomogeneous Neumann problem on compact convex domains.
\end{abstract} \noindent

\noindent
 AMS subject Classification:\ Primary: 60J75, 60J45; Secondary: 42B35, 42B20, 35J25.   \\
\noindent
 Keywords: Gradient estimate,  Neumann semigroup,  reflecting Brownian motion,
 Neumann problem, Hardy space,  Green operator.
 \vskip 2cm

\section{Introduction}

 Let $\OO$ be a $d$-dimensional compact Riemannian manifold with boundary
$\pp \OO$. Let $N$ be the   inward unit normal vector field of $\pp
\OO$. If $\pp \OO$ is $C^2$-smooth, then the second fundamental form of $\pp \OO$ is defined as
$$\II(v_1, v_2):= -\<\nn_{v_1}N, v_2\>,\ \ \ v_1,v_2\in T\pp \OO,$$ where $T\pp \OO$ is the tangent space of $\pp \OO$.
We
call the boundary $\pp \OO$ (or the manifold $\OO$) convex if $\mathbb
I(v,v)\ge 0$ for all $v\in T\pp \OO.$

Let $P_t$ be the
Neumann semigroup generated by $\DD$, the Laplacian  on $\OO$ with the Neumann boundary condition. Let $p_t(x,y)$ be the Neumann heat kernel, which is the density of $P_t$ w.r.t. the Riemannian volume measure. For any $p,q\ge 1$, let $\|\cdot\|_{p\to q}$ denote the operator norm from $L^p(\OO)$ to $L^q(\OO)$.
When $\pp \OO$ is $C^2$-smooth, it is easy to prove the following uniform gradient estimate of $P_t$, which is important in potential analysis of the Neumann Laplacian as shown in Section 4 below.

\beg{thm} \label{T1.1} Let $\OO$ be a compact Riemannian manifold with    $C^2$-smooth boundary. Then there exists a constant $C>0$ such that

\beq\label {UG} \|\nn P_t\|_{1\to\infty}\le \ff{C}{t^{(d+1)/2}},\ \ t>0.\end{equation} Consequently, letting $\rr$ be the Riemannian distance on $\OO$,  one has

\beq\label{UH} |\nn p_t(\cdot, y)(x)|\le \ff{C}{t^{(d+1)/2}}\exp\Big[-\ff{\rr(x,y)^2}{ct}\Big],\ \ t>0, x,y\in \OO\end{equation} for some constants $C,c>0.$ \end{thm}

We remark that (\ref{UG}) is sharp (for short time) since the equality holds for the classical heat semigroup on $\RN$ and some constant $C>0.$  Since the boundary is $C^2$-smooth such that the second fundamental form is bounded below, the above theorem can be proved by using the reflecting Brownian motion and exponential moments of its local time (see Section 2).

When $\pp \OO$ is merely Lipschitzian such that the second fundamental form is not well defined,   the argument
 presented in Section 2 is no longer valid. Indeed, for general  manifolds with Lipschitzian boundary, even  the existence and uniqueness of the reflecting Brownian motion is  unknown. Nevertheless, for compact convex domains on $\RN$, the reflecting Brownian motion has been constructed by Bass and Hsu in \cite{BH90, BH91}, which enables us to
 derive the above gradient estimates.

\begin{thm} \label{th1.1} Let $\Omega$ be a compact convex domain in $\RN$.
Then $(\ref{UG})$ and $(\ref{UH})$ hold for some constants $C,c>0$ and $\rr(x,y)=|x-y|.$
\end{thm}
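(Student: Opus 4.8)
The plan is to obtain (\ref{UG}) by splitting the semigroup and isolating the single place where convexity is used. Writing $\nn P_t=\nn P_{t/2}\circ P_{t/2}$ gives
$$\|\nn P_t\|_{1\to\infty}\le \|\nn P_{t/2}\|_{\infty\to\infty}\,\|P_{t/2}\|_{1\to\infty}.$$
The factor $\|P_{t/2}\|_{1\to\infty}$ is the standard ultracontractive bound $\le C t^{-d/2}$, valid on a compact convex domain via the Sobolev/Nash inequality. The factor $\|\nn P_{t/2}\|_{\infty\to\infty}$ is the smoothing gradient bound $\le C t^{-1/2}$, and this is where convexity enters. Multiplying the two yields the exponent $(d+1)/2$ for small $t$; for large $t$ the left-hand side decays exponentially by the spectral gap of the compact domain (using $\nn P_t f=\nn P_t(f-\bar f)$), so (\ref{UG}) holds there trivially and the short-time regime carries all the content.

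For the smoothing bound I would invoke a Bismut-type derivative formula for the reflecting diffusion $X_t$: up to a fixed constant,
$$\nn P_t f(x)=\E^x\!\Big[f(X_t)\,M_t\Big],\qquad M_t=\ff1t\int_0^t Q_s^*\,\d B_s,$$
where $Q_s$ is the damped derivative flow, which on $\RN$ satisfies $\d Q_s=-\II(Q_s)\,\d L_s$ along the boundary local time $L_s$ of $X_t$. The crucial point is that convexity, $\II\ge 0$, forces $\|Q_s\|\le 1$ for all $s$, so that no exponential moment of $L_s$ is needed. Itô's isometry then gives $\E^x|M_t|^2\le \ff1t$, whence $\|\nn P_t\|_{\infty\to\infty}\le C t^{-1/2}$. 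This is precisely the mechanism behind Theorem \ref{T1.1}, with the lower bound on $\II$ improved from $-\si$ to $0$, so that the resulting constant does not see the fine geometry of the boundary.

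Since $\II$ is undefined for a merely Lipschitzian convex boundary, I would make the previous paragraph rigorous by approximation: exhaust $\OO$ from inside by smooth convex domains $\OO_n\ua\OO$ (each with $\II_n\ge 0$), apply Theorem \ref{T1.1} on every $\OO_n$, and observe that the constant is uniform in $n$ precisely because $\|Q_s\|\le 1$ uses only $\II_n\ge 0$. One then passes to the limit using the reflecting Brownian motion on $\OO$ supplied by Bass and Hsu \cite{BH90,BH91}: the synchronous-coupling non-expansion on convex domains, $|X_t^x-X_t^y|\le|x-y|$, together with $\OO_n\ua\OO$, yields the stability of the reflecting motions and hence $p_t^{(n)}\to p_t$ locally uniformly. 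Because the estimates make $p_t^{(n)}(\cdot,y)$ equi-Lipschitz with constant $C t^{-(d+1)/2}$, the limit $p_t(\cdot,y)$ inherits the same Lipschitz bound, which is (\ref{UG}); finally (\ref{UH}) follows from (\ref{UG}) and the Gaussian upper bound for $p_t$ exactly as in Theorem \ref{T1.1}.

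The main obstacle is the limit passage in the third step. The Neumann problem is not monotone under domain inclusion, so $p_t^{(n)}\to p_t$ cannot be taken for granted; it is here that convexity is indispensable, furnishing both the uniform bound $\|Q_s\|\le 1$ and the non-expansion of the synchronous coupling that controls the Bass--Hsu reflecting motions. Verifying this convergence---equivalently, the Mosco convergence of the Dirichlet forms on $H^1(\OO_n)$ to the form on $H^1(\OO)$---is the technical heart of the argument, and it is the only point that genuinely requires the construction of \cite{BH90,BH91} rather than the smooth theory of Section 2.
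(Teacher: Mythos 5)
Your high-level decomposition is the same as the paper's: write $\nn P_t$ as a gradient-smoothing step composed with an ultracontractive step, get the exponent $(d+1)/2$ for small $t$, and dispose of large $t$ by the spectral gap. But your mechanism for the gradient-smoothing bound $\|\nn P_t\|_{\infty\to\infty}\le Ct^{-1/2}$ — a Bismut formula on smooth convex approximations $\OO_n\ua\OO$ followed by a limit passage — contains a genuine gap that you yourself flag: the convergence $p_t^{(n)}\to p_t$ is asserted, not proved. The justification you offer does not work as stated. The synchronous-coupling contraction $|X_t^x-X_t^y|\le|x-y|$ compares two solutions of the Skorokhod equation \emph{in the same domain}; it rests on $\<y-z,N(y)\>\le 0$ for $y\in\pp\OO$, $z\in\OO$. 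If you instead couple $X^{(n)}$ in $\OO_n$ with $X$ in $\OO$, the It\^o computation produces the term $\<X_s-X_s^{(n)},N_n(X_s^{(n)})\>\,\d l_s^{(n)}$, and this need not be $\le 0$ because $X_s$ may lie outside $\OO_n$. Stability of Neumann problems under domain perturbation is exactly the kind of statement that fails without extra structure, so this step cannot be waved through; it would need a quantitative argument in terms of the Hausdorff distance between $\OO_n$ and $\OO$, or a separate Mosco-convergence proof, neither of which you supply.

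The paper avoids the approximation entirely, and this is worth internalizing: it constructs the reflecting Brownian motion directly on the Lipschitz convex domain $\OO$ (existence from Bass--Hsu plus a Yamada--Watanabe argument for strong solvability of the Skorokhod equation), and observes that the coupling contraction $|X_t^x-X_t^y|\le|x-y|$ needs only the convexity inequality $\<y-z,N(y)\>\le 0$ — no second fundamental form, no smoothness. This gives $|\nn P_t^0f|\le P_t^0|\nn f|$ at once, and then a Bakry--\'Emery interpolation along $s\mapsto P_s^0(P_{t-s}^0f)^2$ yields $t|\nn P_t^0f|^2\le P_t^0f^2-(P_t^0f)^2$, which is your $t^{-1/2}$ smoothing bound with constant $1$. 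So the ingredients you were trying to extract from the Bismut formula ($\|Q_s\|\le 1$ under $\II\ge 0$) are obtained directly on $\OO$ from the sign condition $\<y-z,N(y)\>\le 0$, and the only input from \cite{BH90,BH91} is the existence of the reflecting motion — not any stability of it under domain approximation. If you want to salvage your route, the missing lemma is precisely the Mosco (or pointwise heat-kernel) convergence for inner smooth convex exhaustions, which is true but is a nontrivial piece of work that your proposal does not contain.
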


The proofs of these two theorems will be given  in the next two sections respectively.  In Section 4, we introduce some  applications of our results
to   the study of the Hardy spaces,
Riesz transforms associated to the Neumann Laplacian, and  regularity
 of solutions to the inhomogeneous Neumann problem on compact convex domains.

\section{Proof of Theorem \ref{T1.1}}

We first observe that for the proof of (\ref{UG}) it suffices to consider $t\in (0,1].$ Let $\d x$ denote the Riemannian volume measure on $\OO$.
Since $\nn P_t f=\nn P_t \hat f$, where $\hat f:=f-\int_\OO f(x)\d x$, we may assume that $f$ itself has zero integral on $\OO$. Let $\ll_1 (>0)$ be the first Neumann eigenvalue on $\OO$. We have

\beq\label{LL} \|P_t f^2\|_2\le \e^{-\ll_1t}\|f\|_2,\ \ t>0.\end{equation}
On the other hand,
 by taking e.g. $s=t$ in \cite[Corollary 1.4]{W10b}, we obtain

$$|B(x,\ss t)|\cdot |P_t f(x)|\le C_1\int_{B(x,\ss t)} P_{2t}|f|(y)\d y\le C_2\|f\|_1, \ \ t\in (0,1]$$ for some constant $C_1>0$, where $B(x,\ss t):=\{y\in \OO: \rr(x,y)\le \ss t\}$ with volume  $|B(x,\ss t)|$.  Since $\OO$ is compact, we have $|B(x,\ss t)|\ge ct^{d/2}$ for some constant $c>0$
 and all $t\ge 0, x\in \OO$. Therefore,

\beq\label{UU} \|P_t\|_{1\to\infty} \le \ff{C_2} {t^{d/2}},\ \ t\in (0,1]\end{equation}  for some constant $C_2>0$.
Now, if (\ref{UG}) holds for $t\le 1$,  then  (\ref{LL}) and (\ref{UU}) yield

\beg{equation*}\beg{split} \|\nn P_t f\|_{\infty}&= \|\nn P_{1/2}P_{t-1}P_{1/2}f\|_{\infty}\le C_3 \|P_{t-1}P_{1/2}f\|_2\\
&\le C_3\e^{-(t-1)\ll_1} \|P_{1/2} f\|_2\le C_4 \e^{-\ll_1 t}\|f\|_1,\ \ t>1\end{split}\end{equation*}
 for some constants $C_3,C_4>0$. Therefore, (\ref{UG}) also holds for $t>1.$

From now on, we assume that $t\le 1.$  Let $K,\si$ be two constants such that $\Ric\ge -K$ holds on $\OO$ and $\II\ge -\si$ holds on $\pp\OO.$ By \cite[Theorem 1.1(7)]{W10a}, we have

\beq\label{2.1} |\nn P_t f|^2\le \ff{2K^2}{(1-\e^{-2Kt})^2} \big(P_t f^2 -(P_t f)^2\big) \E \int_0^t\e^{2\si l_s-2Ks}\d s,\  f\in C(M),\end{equation} where $(l_s)_{s\ge 0}$ is the local time on $\pp \OO$ for the reflecting Brownian motion on $\OO$. Next, according to \cite[Proof of Lemma 2.1]{W05}, there exists a constant $c>0$ such that

$$\E\e^{\si l_s}\le \e^{c (s+\ss s)},\ \ \ s\le 1.$$ Therefore, it follows from (\ref{2.1}) that

$$\|\nn P_t f\|_\infty^2 \le \ff{C_5}{t} \|P_t f^2\|_\infty,\ \  f\in C(M)$$ holds for some constant $C_5>0.$  Replacing  $f$ by $P_tf$ and using (\ref{UU}), we arrive at

$$\|\nn P_{2t} f\|_\infty \le\ff{C_6}{t^{(d+1)/2}}\|f\|_1,\ \ f\in C(M)$$ for some constant $C_6>0$. This proves (\ref{UG}) for $t\le 2.$

Finally, since (\ref{UG}) is equivalent to

$$\sup_{x,y\in\OO} |\nn p_t(\cdot, y)(x)|\le \ff{C}{t^{(d+1)/2}},\ \ t>0,$$ the inequality (\ref{UH}) follows from the self-improvement property as in \cite[Theorem 4.9]{CS}.

\section{Proof of Theorem~\ref{th1.1} }

We shall make use of the reflecting Brownian motion determined as
solutions to the Skorokhod equation

\begin{equation}\label{E}
X_t= x+ W_t + \int_0^t N(X_s) \d l_s, \ \ t\geq
0,\end{equation}

\noindent
where $x\in \OO$, $W_t$ is a $d$-dimensional Brownian motion on
a complete filtrated probability space $({\mathscr E}, \mathscr F_t, \mathbb P)$, $X_t$
is a continuous adapted process on $\Omega$, and $l_t$ is a predictable
continuous increasing process with $l_0=0$ which increases only when
$X_t\in \partial \Omega$. If $(X_t, l_t)$ solves (\ref{E}) for some
$d$-dimensional Brownian motion $W_t$, we call $X_t$ the reflecting
Brownian motion on $\Omega$starting from $x$, and call  $l_t$ its local time on $\partial \Omega$.

\begin{lem}\label{L1}
 For any   $d$-dimensional Brownian motion $W_t$
and   any $x\in \Omega$, $(\ref{E})$ has a  unique solution.
\end{lem}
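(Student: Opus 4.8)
The plan is to establish existence and uniqueness separately, with the convexity of $\OO$ carrying the essential weight in the uniqueness half. I would begin with uniqueness, since it is the part where convexity yields an almost immediate contraction. Suppose $(X_t, l_t)$ and $(\tt X_t, \tt l_t)$ both solve (\ref{E}) driven by the \emph{same} Brownian motion $W_t$ and started from the same point $x$. The two equations share the identical martingale part $W_t$, so the difference $X_t-\tt X_t$ has finite variation and Itô's formula produces no bracket term:
$$\d |X_t-\tt X_t|^2 = 2\<X_t-\tt X_t,\,N(X_t)\>\,\d l_t - 2\<X_t-\tt X_t,\,N(\tt X_t)\>\,\d \tt l_t.$$
Now $\d l_t$ charges only the set $\{X_t\in\pp\OO\}$; at such an instant the supporting hyperplane property of the convex set $\OO$ gives $\<\tt X_t-X_t,\,N(X_t)\>\ge 0$ (because $\tt X_t\in\bar\OO$ and $N$ points inward), so the first term is non-positive. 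By the symmetric argument applied to $\d\tt l_t$, the second term is non-positive as well. Hence $t\mapsto|X_t-\tt X_t|^2$ is non-increasing, and since it vanishes at $t=0$ it vanishes identically, giving pathwise uniqueness.

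For existence I would use a penalization (Moreau--Yosida) approximation, exploiting that the metric projection $\pi$ onto the closed convex set $\bar\OO$ is everywhere defined and $1$-Lipschitz. For each $\vv>0$ let $X^\vv$ solve the unconstrained equation
$$\d X_t^\vv = \d W_t + \ff 1 \vv\big(\pi(X_t^\vv)-X_t^\vv\big)\,\d t,\qquad X_0^\vv = x,$$
which has a unique strong solution because its drift is globally Lipschitz. This drift always points from $X^\vv$ toward its projection, i.e. into $\bar\OO$, so it plays the role of the reflecting term; setting $l_t^\vv := \ff 1 \vv\int_0^t|\pi(X_s^\vv)-X_s^\vv|\,\d s$ one expects that, as $\vv\downarrow 0$, $X^\vv$ converges uniformly on compact time sets to a process $X$ taking values in $\bar\OO$ while $l^\vv$ converges to a continuous increasing process $l$ supported on $\{X_t\in\pp\OO\}$, the limiting pair $(X,l)$ solving (\ref{E}). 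An equivalent route is to approximate $\OO$ from outside by a decreasing sequence of smooth convex domains, solve the Skorokhod equation on each by the classical theory for $C^2$ boundaries, and pass to the limit. In either route the convexity-based uniqueness just proved identifies the limit, so there is no ambiguity. This construction is carried out in detail by Bass and Hsu \cite{BH90, BH91}, whose results we may invoke.

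The main obstacle is the existence step. Uniqueness is a one-line consequence of the supporting hyperplane inequality, but producing $l$ as a genuine continuous increasing process requires controlling the penalization term uniformly in $\vv$ and showing that in the limit the drift concentrates on $\pp\OO$ and acts along the inward normal. The delicate points are therefore the tightness of the family $(X^\vv,l^\vv)$ and the identification of the limiting reflection term; the equicontinuity estimate on $l^\vv$ (analogous to the local-time control used in Section 2) together with the characterization of admissible reflection directions through the supporting hyperplanes of $\OO$ are the crucial ingredients.
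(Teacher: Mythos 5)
Your uniqueness argument is exactly the paper's: Itô's formula applied to $|X_t-\tilde X_t|^2$ kills the martingale part, and the supporting-hyperplane inequality $\langle z-y, N(y)\rangle\ge 0$ for $y\in\partial\Omega$, $z\in\Omega$, together with the fact that $\d l_t$ charges only $\{X_t\in\partial\Omega\}$, makes both remaining terms nonpositive. That half is correct and complete.

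The existence half has a genuine gap, in two respects. First, the penalization scheme you sketch (Moreau--Yosida drift $\varepsilon^{-1}(\pi(X^\varepsilon_t)-X^\varepsilon_t)$, tightness, identification of the limiting reflection term) is a legitimate alternative route --- it is essentially Tanaka's construction for convex domains, and if completed it would even give strong existence directly for the prescribed $W_t$ --- but you explicitly leave the two hard steps (uniform control of $l^\varepsilon$ and identification of the limit) unproved, so as written nothing is established. Second, the fallback of ``invoking Bass--Hsu'' does not patch this, because their construction is not a penalization argument: they build the reflecting Brownian motion as the Hunt process of the Dirichlet form $\frac12\int_\Omega|\nabla f|^2$ and then prove a semimartingale decomposition, which yields a solution of the Skorokhod equation driven by \emph{some} Brownian motion $\tilde W_t$, i.e.\ only weak existence. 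The lemma asserts existence for an \emph{arbitrary prescribed} Brownian motion $W_t$, so one must upgrade weak existence plus pathwise uniqueness to strong existence. This is precisely the Yamada--Watanabe step the paper carries out (the solution is a measurable functional $F$ of the driving noise, and $F(W)$ solves the equation for any $W$ with the Wiener law); it is absent from your proposal and is not a formality one can skip. Either finish the penalization argument or add the Yamada--Watanabe transfer; as it stands the existence claim is not proved.
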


\begin{proof}
(a) Uniqueness. Let $(X_t, l_t)$ and $(\tilde
X_t, \tilde l_t)$ be two solutions to (\ref{E}). By the It\^o formula,

$$
|X_t-\tilde X_t|^2 = 2\int_0^t\langle X_s-\tilde X_s, N(X_s)\rangle \d l_s + 2\int_0^t \langle
{\tilde X}_s-X_s, N({\tilde X}_s)\rangle \d{\tilde l}_s.
$$

\noindent
By the convexity of $\partial \Omega$ we
see that $\langle y-z, N(y)\rangle\leq 0$ if $y\in \partial \Omega$ and $z\in \Omega$.
Moreover, since $ d l_s=0$ (correspondingly, $ d {\tilde l}_s=0$) for
$X_s\notin \partial \Omega$ (correspondingly, $\tilde X_s\notin \partial \Omega$), we
conclude that $|X_t-\tilde X_t|^2=0$ for all $t\geq 0.$

(b) Existence. By using the regularity of the associated Dirichlet
form,  \cite[Theorem 4.4]{BH91} ensures the existence of a
reflecting Brownian motion.  According to \cite[Theorem 1]{BH90},
this reflecting Brownian motion  solves the Skorokhod equation
(\ref{E}) for some $d$-dimensional Brownian motion $\tilde W_t$,  i.e.
there exists $(\tilde X_t, \tilde l_t)$ such that $\tilde X_t$ is a
continuous adapted process on $\Omega$, $\tilde l_t$ is a predictable
continuous increasing process with $\tilde l_0=0$ which increases only
when $\tilde X_t\in\partial \Omega$, and

\begin{equation}\label{E'}
\tilde X_t= x +\tilde W_t + \int_0^t N(\tilde X_s) \d {\tilde l}_s,\
\ t\geq 0
\end{equation}

\noindent
 holds. We aim to prove that for any
$d$-dimensional Brownian motion $W_t$, the equation has a solution.
Due to (a), this  follows from the  Yamada-Watanabe criterion \cite{YW} (cf. the proof of
\cite[Theorem 5.9]{BBC}). For readers' convenience, we present below a brief proof.   By the uniqueness of solutions to (\ref{E'}),
$(\tilde X, \tilde l)$ is determined by $\tilde W$; that is, there exists a
  measurable function

  $$
  F=(F_1,F_2): C([0,\infty); \RN)\to C([0,\infty);\, \Omega)\times C_I([0,\infty);
  [0,\infty))$$

  \noindent
  for $\sigma$-fields induced by the topology of locally uniform
  convergence, where $C_I([0,\infty); [0,\infty))$ is the space of
  all non-negative continuous  increasing functions on $[0,\infty)$ with initial data $0$, such that

  $$
  (\tilde X,\tilde l)= F(\tt W)=(F_1(\tt W), F_2(\tt W)).
  $$

  \noindent
  So, letting $\mu$ be the Wiener measure
  on $C([0,\infty); \RN)$ (i.e. the distribution of $\tt W$), we have

  $$
  F_1(\omega)_t = x +\omega_t +\int_0^t N(F_1(\omega)_s) \d F_2(\omega)_s,\ \
  t\geq 0
  $$

  \noindent
  for $\mu$-a.e. $\omega\in C([0,\infty);\RN).$ Therefore,
  for any $d$-dimensional Brownian motion $W$, which has the same distribution $\mu$,

$$
F_1(W)_t = x +W_t +\int_0^t N(F_1(W)_s) \d F_2(W)_s,\ \
  t\geq 0
  $$

  \noindent
  holds a.s. This means that $(X,l):= (F_1(W), F_2(W))$
  solves the equation (\ref{E}).
\end{proof}

\medskip

\begin{lem} \label{L2}
Let $X_t^x$ be the unique solution to
$(\ref{E})$ for $X_0=x\in \Omega.$ For $f\in \B_b(\OO)$, the class of all bounded measurable functions on $\OO$, let $P_t f(x):= u(t,x)$ solve the Neumann heat equation

\beq\label{N} \pp_t u(\cdot,x)(t)=\DD u(t,\cdot)(x),\ Nu(t,\cdot)|_{\pp \OO}=0, u(0,\cdot)=f.\end{equation}
Then

$$
P_t f(x)= \mathbb E f(X_{2t}^x),\ \ t\ge 0, x\in\OO.
$$
\end{lem}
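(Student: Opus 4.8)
The plan is to identify the transition semigroup $Q_t f(x):=\E f(X_t^x)$ of the reflecting Brownian motion with $\e^{t\DD/2}$, so that $P_t=\e^{t\DD}=Q_{2t}$; the factor $2$ is forced because the driving noise $W_t$ in $(\ref{E})$ is a \emph{standard} $d$-dimensional Brownian motion, whose generator is $\ff12\DD$ rather than $\DD$. First I would fix $t>0$ and a smooth $f$, write $u(s,\cdot):=P_s f$ for the solution of $(\ref{N})$, and introduce the time-reversed profile $v(s,y):=u(t-\ff s2,y)=P_{t-s/2}f(y)$ for $s\in[0,2t]$. By the heat equation $\pp_s u=\DD u$ this $v$ solves the backward equation $\pp_s v+\ff12\DD v=0$, with $v(0,\cdot)=P_tf$ and $v(2t,\cdot)=f$.

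Next I would apply the It\^o formula to the space-time process $v(s,X_s^x)$. Since $(\ref{E})$ gives $\d X_s=\d W_s+N(X_s)\,\d l_s$ and the bounded-variation part contributes no quadratic variation, $\d\<X^i,X^j\>_s=\dd_{ij}\,\d s$, so
$$\d\, v(s,X_s)=\Big(\pp_s v+\ff12\DD v\Big)(s,X_s)\,\d s+\big\<\nn v(s,X_s),N(X_s)\big\>\d l_s+\big\<\nn v(s,X_s),\d W_s\big\>.$$
The backward equation makes the $\d s$-term vanish identically. The local time $l_s$ increases only on the set $\{X_s\in\pp\OO\}$, where the Neumann condition $Nu|_{\pp\OO}=0$ gives $\<\nn v,N\>=0$, so the $\d l_s$-term also vanishes. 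Hence $s\mapsto v(s,X_s)$ is a local martingale, and by compactness of $\OO$ together with boundedness of $\nn v$ on $[0,2t]\times\OO$ (for smooth $f$) it is a genuine martingale. Taking expectations yields $\E\, v(2t,X_{2t}^x)=v(0,x)$, that is $\E f(X_{2t}^x)=P_tf(x)$.

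Finally I would remove the smoothness of $f$: both $f\mapsto\E f(X_{2t}^x)$ and $f\mapsto P_tf$ are $L^\infty$-contractions that agree on the dense class $C^2(\OO)$, so a monotone-class and dominated-convergence argument extends the identity to all $f\in\B_b(\OO)$. The main obstacle is the up-to-boundary regularity needed to legitimize the It\^o formula: for a merely convex (possibly Lipschitz) $\pp\OO$ one cannot assume $v(s,\cdot)\in C^2(\bar\OO)$, so the boundary correction term is not automatically justified. I would resolve this either by approximating $\OO$ from the interior by smooth convex domains and passing to the limit, or---matching the Dirichlet-form construction of $X_t$ invoked in Lemma~\ref{L1}---by directly identifying $(Q_t)$ with the symmetric Markov semigroup associated with the form $(f,g)\mapsto\ff12\int_\OO\<\nn f,\nn g\>\,\d x$ on $W^{1,2}(\OO)$, whose $L^2$-generator is exactly $\ff12\DD$ under the Neumann condition; this gives $Q_t=\e^{t\DD/2}$, and hence $Q_{2t}=P_t$, with no boundary smoothness required.
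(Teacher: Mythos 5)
Your argument is essentially the paper's own proof: the paper applies It\^o's formula to $\Phi(s):=u(t-s,X_{2s}^x)$, notes that the drift term vanishes by the heat equation (the time change $s\mapsto 2s$ absorbing the factor $\tfrac12$ from the standard Brownian driver) and the local-time term vanishes by the Neumann condition, and concludes $P_tf(x)=\mathbb E\,\Phi(0)=\mathbb E\,\Phi(t)=\mathbb E f(X_{2t}^x)$. Your extra care about up-to-boundary regularity on a merely convex domain and the extension from smooth to bounded measurable $f$ goes beyond what the paper records, but the core mechanism is identical.
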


\begin{proof} Since $u(t,x):= P_t f(x)$ satisfies (\ref{N}) and
$X_t^x$ satisfies (\ref{E}), by the It\^o formula,  
the process

$$
\Phi(s):=u(t-s, X_{2s}^x)=P_tf(x)+ \int_0^s 2\langle\nabla u(t-s,\cdot)(X_{2s}^x), \d
W_s\rangle,\ \ s\in [0,t]
$$

\noindent
is a martingale. In particular,

$$
P_tf(x)=  \mathbb E \Phi(0)=\mathbb E \Phi(t)= \mathbb E f(X_{2t}^x).
$$
\end{proof}

\medskip

\begin{proof} [Proof of Theorem~\ref{th1.1}]
As explained in the proof of Theorem \ref{T1.1}, it suffices to prove (\ref{UG}) for $t\in (0,1]$. To this end,
we shall make use of the reflecting Brownian motion introduced
above.

Let $P_t^0f(x)= \mathbb E f(X_t^x).$ To estimate the gradient of $P_t f$,
let $y\ne x$ be two points in $\Omega$, and let $(X_t^x, l_t^x)$ and
$(X_t^y, l_t^y)$ solve (\ref{E}) with $X_0=x$ and $y$ respectively.
Then, as explained in the proof of Lemma \ref{L1}, the convexity of
$\Omega$ implies that

$$|X_t^x-X_t^y|\leq |x-y|,\ \ t\geq 0.$$ Thus,

$$
\frac{|P_{t}^0 f(y)-P_t^0 f(x)|}{|x-y|} \leq
\frac{\mathbb E|f(X_{t}^x)-f(X_{t}^y)|}{|x-y|}\leq
\mathbb E\Big(\frac{|f(X_{t}^x)-f(X_{t}^y)|}{|X_{t}^x-X_t^y|}\Big).
$$

\noindent
Letting
$y\to x$ we arrive at

\begin{equation}\label{GG}
|\nabla P_t^0 f|\leq P_t^0|\nabla f|,\ \ t\geq
0.
\end{equation}
Due to an argument of Bakry-Emery \cite{BE}, this
implies that

\begin{equation}\label{P}
t|\nabla P_t^0 f|^2\leq P_t^0 f^2-(P_t^0 f)^2,\ \ t> 0,
f\in \mathscr B_b(\Omega).
\end{equation}

\noindent
Indeed, for a smooth function $g$ on
$\Omega$ satisfying the Neumann boundary condition, (\ref{E}) and the
It\^o formula for $g(X_t^x)$ imply that

\begin{equation}\label{A}
P_t^0 g={1\over 2}  \int_0^t P_s \Delta g \d s.
\end{equation}

\noindent
Since $P_t =P_{2t}^0$, by Lemma \ref{L2} we have

$$\ff{\d}{\d t}P_{2t}^0g =  \DD P_{2t}^0g.$$  Combining this with (\ref{A}) we obtain

\beq\label{*W}
{\d\over \d s} P_s^0 g= {1\over 2}  \Delta P^0_sg = {1\over 2}  P^0_s \Delta g.
\end{equation}

\noindent
Hence, it follows from (\ref{GG})  and the Jensen inequality that

\begin{eqnarray}\label{ee}
t\big|\nabla P^0_tf\big|^2 =\int_0^t\big|\nabla P^0_s \big( P^0_{t-s}f\big)\big|^2\d s
  \leq \int_0^t  P^0_s  \big|\nabla P^0_{t-s}f \big|^2 \d s.
\end{eqnarray}

\noindent
(\ref{*W}) also implies  that  $
{\d\over \d s} \Big( P_s^0 (P_{t-s}^0 f)^2\Big) =   P^0_s  \big|\nabla P^0_{t-s}f \big|^2.
$  This, together with (\ref{ee}), shows that

 $$
 P_t^0 f^2-(P_t^0f)^2 =\int_0^t \frac{ \d }{ \d s} P_s^0 (P_{t-s}^0f)^2\d s=
\int_0^ tP_s^0|\nabla P_{t-s}^0f|^2\d s\geq t |\nabla P_t^0 f|^2.
$$

\noindent
So,
(\ref{P}) holds. Combining (\ref{P}) with the known uniform upper
bound of the Neumann heat kernel on convex domains (see e.g. \cite[Theorem 3.2.9]{Da})

\begin{equation}\label{H}
\|P_t f\|_\infty \leq C t^{-d/2} \int_\Omega |f|(x)\d x,\
\ t\in (0,1], f\in \mathscr B_b(\Omega),
\end{equation}
we conclude that

\begin{eqnarray*}
\|\nabla P_t f\|_\infty&=& \|\nabla P_t^0 (P_{t/2}f)\|_\infty
 \leq {1\over \sqrt{t}}\sqrt{ P_t^0\big[(P_{t/2} f)^2\big]}\\
&\leq&{C\over \sqrt{t}}
\|P_{t/2}f\|_\infty
 \leq  C t^{-(d+1)/2} \int_\Omega |f|(x) \d x,\ \ f\in
\mathscr B_b(\Omega)
\end{eqnarray*}

\noindent
holds for some $C>0$ and all $t\in (0,1].$ This
implies the desired gradient estimate for $t\in (0,1].$
\end{proof}

\bigskip

\section{Applications}
\setcounter{equation}{0}

Throughout this section, we let $\OO$ be a compact convex domain in $\RN$, and let    $\OO^\circ$ be the interior of $\OO$. 
It is well known that the generator $(\DD,\D(\DD))$ of
$P_t$ in $L^2(\OO)$ is a negatively definite self-adjoint operator with 
discrete spectrum.  Let $\DD_N=-\DD$, which is thus a positive definite 
self-adjoint operator such that $P_t =\e^{-\DD_N t},\ t\ge 0.$

\subsection{The Hardy spaces on compact convex domains}

For $0<p\leq 1$ we let $h^p(\RN)$ denote the classical (local)
Hardy space in $\RN$. We consider two versions of this space adapted
to  $\Omega$.
Let ${\mathscr D}(\Omega)$ denote the space of $C^{\infty}$ functions with
compact support in $\Omega^\circ$, and let ${\mathscr D}'(\Omega)$ denote  its dual,
the space of distributions on $\Omega^\circ$. The first adaptation of the local
Hardy space to $\Omega$, denoted by $h^p_r(\Omega)$, consists of
elements of ${\mathscr D}'(\Omega)$ which are the restrictions to $\Omega^\circ$ of
elements of $ h^p(\RN)$. That is, for $0<p\leq 1$ we set
\begin{equation*} \beg{split}
h^p_r(\Omega)&:=\big\{f\in{\mathscr S}'(\RN):
\mbox{there exists $F\in h^p(\RN)$ such that $F|_{\Omega^\circ}=f$} \big\}\\
&=h^p(\RN)/\big\{F\in h^p(\RN):\,
\mbox{\rm $F=0$ in $\Omega^\circ$}\big\},
\end{split}\end{equation*}
\noindent which is equipped with the quasi-norm
$$
\|f\|_{h^p_r(\Omega)}:=\inf\big\{\|F\|_{h^p(\RN)}:\,
\mbox{$F\in h^p(\RN)$ such that $F|_{\Omega^\circ}=f$}\big\}.
$$
The second adaptation of the local Hardy space to $\Omega$, denoted by $h^p_z(\Omega)$,
consists of distributions   in $\Omega^\circ$ with   extension  by zero to $\RN$ belonging to
$h^p(\RN)$. More specifically, for $0<p\leq 1$,
$$h^p_z(\Omega)
:= h^p(\RN)\cap\big\{ f\in h^p(\RN):
\mbox{\rm  $f=0$ in $\Omega^c$}\big\}/\big\{ f\in h^p(\RN):
\mbox{\rm $f=0$ in $\Omega^\circ$}\big\},
$$ where $\OO^c:=\RN\setminus \OO.$ We can identify $ h^p_z(\Omega)$ with a set of distributions in
${\mathscr D}'(\Omega)$ which, when equipped with the natural quotient norm,
becomes a subspace of $h^p_r(\Omega)$ (see \cite{CKS}
for more details). Obviously,
we have that $h_z^p(\Omega)\subseteq  h_z^p(\Omega)$ whenever  $ d/(d+1)<p\leq 1.$

Given a function $f\in L^2(\Omega)$, consider the following
local version of the non-tangential maximal operator associated
with the heat semigroup generated by the operator $\Delta_N$:

$$
N_{\rm loc,\,\Delta_N}f(x)
:=\sup_{y\in \Omega,\,|y-x|<t\leq 1}|\e^{-t^2\Delta_N}f(y)|,\quad x\in\Omega.
$$

\noindent
For $0<p\leq 1$, the space  $h_{\Delta_N}^p(\OO)$ is then defined as the completion
of $L^2(\Omega)$ in the quasi-norm
\begin{eqnarray}\label{RB-D1}
\|f\|_{h_{\Delta_N}^p(\OO)}:=\|N_{\rm loc,\,\Delta_N}f\|_{L^p(\Omega)}.
\end{eqnarray}

\noindent

Based on the gradient estimate (\ref{UH})   in
Theorem~\ref{th1.1} and the adapted atomic theory of Hardy spaces $h_{\Delta_N}^p(\OO)$,
the following  result is  obtained in \cite{DHMMY}, see also  
\cite{AR, CKS, MM, DHMMY} and references within for  discussions on the Hardy spaces $h_r^p(\Omega)$.
\begin{thm}[\cite{DHMMY}]\label{th3.1}  Let $\Omega$ be a compact convex domain in $\RN$. Then
$h_z^p(\Omega)= h_{\Delta_N}^p(\Omega)$ for $ d/(d+1)<p\leq 1.$
\end{thm}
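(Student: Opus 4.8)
The plan is to establish the identification $h_z^p(\OO)=h_{\Delta_N}^p(\OO)$ for $d/(d+1)<p\le 1$ by comparing atomic decompositions on both sides, with the gradient estimate (\ref{UH}) serving as the crucial analytic input that makes the heat-semigroup-adapted space $h_{\Delta_N}^p(\OO)$ behave like a genuine local Hardy space. First I would recall the molecular/atomic characterization of $h_{\Delta_N}^p(\OO)$: an element factors into atoms $a$ supported in balls $B(x_0,r)\cap\OO$ with the normalization $\|a\|_2\le |B|^{1/2-1/p}$ together with the cancellation condition $\int a\,\d x=0$ for balls of radius $r\le 1$ well inside $\OO$ (and no cancellation required for $r\sim 1$, reflecting the local nature). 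The key point is that $d/(d+1)<p\le 1$ forces exactly one order of vanishing moment, which matches the first-order nature of the gradient bound.

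Next I would prove the inclusion $h_z^p(\OO)\subseteq h_{\Delta_N}^p(\OO)$. Given $f\in h_z^p(\OO)$, decompose it into $h^p(\RN)$-atoms supported in $\OO$ (after extension by zero), and for each such atom $a$ estimate the local nontangential maximal function $N_{\mathrm{loc},\Delta_N}a$. The estimate splits into the region near the support of $a$, controlled by $L^2$ boundedness of the maximal operator, and the far region, where one exploits the cancellation $\int a=0$ together with the pointwise gradient bound: writing $\e^{-t^2\DD_N}a(y)=\int (p_{t^2}(y,z)-p_{t^2}(y,z_0))a(z)\,\d z$ and applying the mean value theorem, (\ref{UH}) yields the decay $|p_{t^2}(y,z)-p_{t^2}(y,z_0)|\le C\,r\,t^{-(d+1)}\exp[-|y-z|^2/(ct^2)]$, which after integration in $t$ and $y$ gives the $L^p$ tail bound needed to sum the atoms. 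This is the step where Theorem~\ref{th1.1} is genuinely used.

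For the reverse inclusion $h_{\Delta_N}^p(\OO)\subseteq h_z^p(\OO)$, I would invoke the abstract atomic decomposition theory for Hardy spaces associated to operators satisfying Davies--Gaffney (finite propagation) estimates and Gaussian heat kernel bounds, as developed in the references cited (e.g.\ the machinery behind \cite{DHMMY}). The heat kernel $p_t$ on $\OO$ satisfies the required Gaussian upper bound (\ref{H}) and, by (\ref{UH}), a matching gradient bound; these are precisely the hypotheses under which any $f\in h_{\Delta_N}^p(\OO)$ admits an atomic decomposition $f=\sum\ll_j a_j$ with $(p,2)$-atoms of the type above and $\sum|\ll_j|^p\lesssim\|f\|_{h_{\Delta_N}^p}^p$. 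One then checks that each such atom, extended by zero, lies in $h^p(\RN)$ with uniformly bounded norm (using the moment condition and the convexity of $\OO$ to ensure the extension does not create boundary singularities), whence $f\in h_z^p(\OO)$.

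The main obstacle I expect is the boundary behavior in the equivalence of the two atomic theories, specifically controlling atoms whose supporting balls meet $\pp\OO$. Because $h_z^p$ requires extension by zero across the boundary while $h_{\Delta_N}^p$ encodes the Neumann condition intrinsically, one must verify that the Neumann semigroup's reflection at the boundary is compatible with the zero-extension: this is where the convexity of $\OO$ and the sharp gradient estimate (\ref{UH}) combine to prevent spurious boundary contributions to the maximal function, and matching the normalization constants across the two quotient-space definitions requires care. I would isolate this boundary analysis as the technical core and handle the interior atoms by the standard Calder\'on--Zygmund tail estimates sketched above.
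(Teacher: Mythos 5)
This theorem is not proved in the paper at all: it is imported verbatim from \cite{DHMMY}, and the only information the paper gives about its proof is that it rests on the gradient estimate (\ref{UH}) of Theorem \ref{th1.1} together with ``the adapted atomic theory of Hardy spaces $h^p_{\Delta_N}(\Omega)$.'' Your sketch is consistent with that description --- you correctly identify the two inclusions, the role of the atomic decompositions on each side, and the precise place where (\ref{UH}) enters (the H\"older-type difference bound $|p_{t^2}(y,z)-p_{t^2}(y,z_0)|\lesssim r\,t^{-(d+1)}\exp[-|y-z|^2/(ct^2)]$ used to get $L^p$ tail decay of $N_{{\rm loc},\Delta_N}a$ for an atom $a$ with $\int a=0$). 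So at the level of strategy there is nothing to object to, and your outline matches the route the paper points to.

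That said, treated as a proof your proposal has real gaps, all concentrated where you yourself flag ``the technical core.'' First, for $h^p_z(\Omega)\subseteq h^p_{\Delta_N}(\Omega)$ you assume that an element of $h^p_z(\Omega)$ can be decomposed into classical $h^p(\mathbb R^d)$-atoms \emph{supported in} $\overline\Omega$; this is a nontrivial fact about $h^p_z$ (it is part of the \cite{CKS} theory) and cannot be taken for granted, since a generic atomic decomposition of an extension $F\in h^p(\mathbb R^d)$ will use atoms straddling $\partial\Omega$. Second, for the reverse inclusion you invoke an atomic decomposition of $h^p_{\Delta_N}(\Omega)$ with atoms satisfying $\int a=0$; the operator-adapted atoms produced by the Davies--Gaffney machinery have the form $a=\Delta_N b$, and the vanishing moment only follows after an integration by parts using the Neumann condition $Nb|_{\partial\Omega}=0$ --- that step, and the verification that the resulting zero-extensions are uniform $h^p(\mathbb R^d)$-molecules for balls centered at boundary points, is exactly the content you defer with ``requires care.'' Finally, the restriction $d/(d+1)<p\le 1$ must be seen to emerge from the single order of smoothness supplied by (\ref{UH}); your sketch asserts this match but does not carry out the exponent count in the tail sum. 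None of these is a wrong turn, but each is a genuine missing step, and the paper itself offers no proof against which to check them --- for the details one must go to \cite{DHMMY}.
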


 \medskip



 \medskip

\subsection{Riesz transforms  on bounded convex domains}
Consider the generalised Riesz
transform  $T=\nabla \Delta_N^{-1/2}$ associated to the Neumann Laplacian $\Delta_N$,
defined by
$$
Tf={2\over  \sqrt{\pi}}\int_0^{\infty}
\nabla \e^{-s\Delta_N}f{ds\over \sqrt{s}}.
$$
The operator $ \nabla \Delta_N^{-1/2}$ is bounded on $L^2(\Omega).$
In \cite{CD}, it is shown that the operator $\nabla \Delta_N^{-1/2}$ is of weak (1,1)
by making use of  (\ref{H}), hence by interpolation,
is bounded on $L^p(\Omega)$ for $1<p\leq 2.$ For the case $p>2,$ according to \cite{ACDH},
the following assertions are equivalent:

\smallskip

(1) For all $p\in (2, \infty)$, there exists $C_p$ such that

$$
\big\| |\nabla \e^{-t\Delta_N}| \big\|_{p\to p}\leq {C\over \sqrt{t}}, \ \ \ \forall t>0.
$$

(2) The Riesz transform  $ \nabla \Delta_N^{-1/2}$ is bounded on $L^p(\Omega) $ for $p\in (2, \infty)$.

\medskip

In terms of the gradient estimate (\ref{UH})   in
Theorem~\ref{th1.1}, it deduces the following theorem.

\medskip

\begin{thm}\label{th3.2} Let $\Omega$ be a compact convex domain in $\RN$.
Let $ T=\nabla \Delta_N^{-1/2}$ be the Riesz transform associated to the Neumann Laplacian $\Delta_N$
on $\Omega$. Then
the operator $\nabla \Delta_N^{-1/2} $  is  bounded  on $L^p(\Omega)$ for all $1<p<\infty$.

If  $p=1$, then the operator $\nabla \Delta_N^{-1/2} $ is also of weak type $(1,1).$

\end{thm}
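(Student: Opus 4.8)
The plan is to prove both assertions of Theorem~\ref{th3.2} by invoking the two equivalent characterizations from \cite{ACDH}, with the gradient estimate (\ref{UH}) serving as the key analytic input. First I would dispose of the range $1<p\le 2$, which is already handled: the argument of \cite{CD} using the uniform heat kernel bound (\ref{H}) shows that $\nabla\DD_N^{-1/2}$ is of weak type $(1,1)$, and together with the $L^2$-boundedness noted in the excerpt, the Marcinkiewicz interpolation theorem yields boundedness on $L^p(\OO)$ for all $1<p\le 2$. This simultaneously establishes the weak $(1,1)$ claim of the theorem, so the remaining work concentrates entirely on the range $2<p<\infty$.

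For $p>2$, by the equivalence $(1)\Leftrightarrow(2)$ recorded from \cite{ACDH}, it suffices to verify condition (1), namely the $L^p\to L^p$ gradient bound
\beq\label{LpGrad}
\big\||\nn \e^{-t\DD_N}|\big\|_{p\to p}\le \ff{C}{\ss t},\qquad t>0.
\end{equation}
The strategy here is to derive (\ref{LpGrad}) from the pointwise Gaussian gradient estimate (\ref{UH}). Writing the action of $\nn P_t$ through the kernel $\nn p_t(\cdot,y)(x)$, the estimate (\ref{UH}) furnishes
$$
|\nn P_t(\cdot,y)(x)|\le \ff{C}{t^{(d+1)/2}}\exp\Big[-\ff{|x-y|^2}{ct}\Big],
$$
and I would control the $L^p$ operator norm of $\nn P_t$ via the integral operator whose kernel is the right-hand side. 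Since $P_t=\e^{-t\DD_N}$ with the identification $P_t=P_{2t}^0$ from Section~3, the substitution $t\mapsto t$ (up to a harmless constant factor absorbed into $C$) converts the Neumann-semigroup gradient estimate into the claimed bound for $\e^{-t\DD_N}$.

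The main step is then a Schur-type estimate: the kernel $K_t(x,y):=t^{-(d+1)/2}\exp[-|x-y|^2/(ct)]$ defines an integral operator whose $L^p\to L^p$ norm I would bound by $C/\ss t$ uniformly in $t$. Concretely, I would first check the uniform bound $\sup_x\int_\OO K_t(x,y)\,\d y\le C/\ss t$ for $t\in(0,1]$, using that the Gaussian mass $\int_{\RN}t^{-d/2}\exp[-|x-y|^2/(ct)]\,\d y$ is bounded by a constant, so the extra factor $t^{-1/2}$ produces exactly the $\ss t^{-1}$ decay; by symmetry of $|x-y|$ the dual bound $\sup_y\int_\OO K_t(x,y)\,\d x\le C/\ss t$ holds as well. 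The Schur test then gives $\|K_t\|_{p\to p}\le C/\ss t$ for every $p\in(1,\infty)$, and in particular for $p>2$. For large $t$ I would again use the spectral-gap decay (as in the $t>1$ argument of Section~2, relying on the first positive Neumann eigenvalue $\ll_1>0$) to upgrade the estimate on $(0,1]$ to all $t>0$, ensuring the uniform constant in (\ref{LpGrad}).

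The hard part will be making the passage from the \emph{pointwise} kernel estimate (\ref{UH}) to the \emph{operator}-norm bound (\ref{LpGrad}) fully rigorous on the domain $\OO$ rather than on $\RN$: one must confirm that the Gaussian upper bound integrates correctly against the Riemannian volume measure near the boundary (where the reflecting Brownian motion accumulates local time) and that the compactness of $\OO$ keeps all constants uniform. Once (\ref{LpGrad}) is in hand, invoking $(1)\Rightarrow(2)$ from \cite{ACDH} closes the $p>2$ case, and combined with the interpolation argument for $1<p\le 2$ and the weak $(1,1)$ bound, the full statement of Theorem~\ref{th3.2} follows.
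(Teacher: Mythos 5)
Your proposal follows essentially the same route as the paper: weak $(1,1)$ from \cite{CD} via (\ref{H}) plus interpolation for $1<p\le 2$, and for $p>2$ the equivalence of \cite{ACDH}, with (\ref{UH}) supplying the $L^p$ gradient bound $\||\nabla \e^{-t\Delta_N}|\|_{p\to p}\le C/\ss t$. The paper leaves the passage from (\ref{UH}) to that operator bound implicit; your Schur-test computation (together with the spectral-gap argument for $t>1$) correctly fills in that step.
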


 \medskip

By (\ref{UH}) and (\ref{H}), we can also extend
the Riesz transform $\nabla \Delta_N^{-1/2} $    to a bounded operator  from $h^p_{\Delta_N}(\Omega)$ into
$L^p(\Omega)$ for $0<p\leq 1$. Hence by Theorem~\ref{th3.1},  it can be
extended to a bounded operator  from
  $h^p_{z}(\Omega)$ into
$L^p(\Omega)$ for ${d/(d+1)}<p\leq 1$.
The proof is similar  to that of  
 \cite[Theorem 4.2]{DHMMY}.  
\bigskip

\subsection{Regularity of solutions to the Neumann problem}
Define the Neumann Green 
operator ${\mathbb{G}}_N$ as the solution operator
$C^{\infty}(\Omega)\ni f\mapsto u={\mathbb{G}}_N(f)\in W^{1,2}(\Omega)$
for the Neumann problem
\begin{eqnarray}\label{e3.7}
\left\{
\begin{array}{rlll}
\Delta u&=f &{\rm in}&\Omega^\circ\\[4pt]
N u&=0 &{\rm on}&\partial\Omega,
\end{array}
\right.
\end{eqnarray}
\noindent where it is also assumed that $\int_{\Omega}f=0$ and the solution
is normalized by requiring that $\int_{\Omega}u=0$.
\medskip

\noindent
\subsubsection{Estimate for the gradient of Green potential}  Let $W^{s,p}(\Omega)$ stand  for the
Sobolev space of functions in $L^p(\Omega)$ with distributional derivatives of order
$s$ in $L^p(\Omega)$. By $L^p_{\bot}(\Omega)$ and $W^{s,p}_{\bot}(\Omega)$ we denote
the subspaces of functions $f$ in $L^p (\Omega)$ and $W^{s,p} (\Omega)$
subject to $\int_{\Omega}f(x) \d x=0.$

Recently,  V. Maz'ya proved the following result.

\begin{thm} [\cite{M}] \label{XX} Let $\Omega$ be a convex domain in ${\mathbb R}^d$.
Let $f\in L^q_{\bot}(\Omega)$ with a certain $q>d$. Then there exists a constant $c$
depending only on $d$ and $q$ such that  the solution
$u\in W^{1,2}_{\bot}(\Omega)$ of the problem $(\ref{e3.7})$ satisfies the estimate

\begin{eqnarray}\label{e3.4}
\|\nabla u\|_{L^{\infty}(\Omega)}\leq c(d,q)
C_{\Omega}^{-1} |\Omega|^{(q-d)/qd} \|f\|_{L^q(\Omega)}.
\end{eqnarray}
\end{thm}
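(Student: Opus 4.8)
The plan is to reduce the gradient bound in $L^\infty$ to the $L^\infty$-boundedness of the Green operator $\mathbb{G}_N$ acting from $L^q_\bot(\OO)$, and then to represent $\nabla\mathbb{G}_N$ through the Neumann semigroup via the subordination formula
\begin{equation*}
\mathbb{G}_N f = \DD_N^{-1}f = \int_0^\infty P_t f\,\d t,\qquad f\in L^q_\bot(\OO),
\end{equation*}
so that
\begin{equation*}
\nabla u = \nabla\mathbb{G}_N f = \int_0^\infty \nabla P_t f\,\d t.
\end{equation*}
Because $f$ has mean zero, the spectral gap estimate $\|P_t f\|_2\le\e^{-\ll_1 t}\|f\|_2$ from Theorem~\ref{T1.1} controls the large-$t$ part of this integral, while the gradient heat-kernel bound (\ref{UH}) of Theorem~\ref{th1.1} controls the small-$t$ part. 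First I would split the integral at $t=1$.

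For the region $t\ge 1$ I would use that $\nabla P_t = \nabla P_{1/2}\circ P_{t-1}\circ P_{1/2}$, bounding the outer $\nabla P_{1/2}$ by (\ref{UG}) and the middle factor by the exponential decay $\e^{-(t-1)\ll_1}$, exactly as in the tail argument in the proof of Theorem~\ref{T1.1}; this piece is integrable in $t$ and yields a bound by $C\|f\|_1\le C'|\OO|^{1-1/q}\|f\|_{L^q}$ via H\"older. For the region $t\in(0,1]$ I would insert the pointwise kernel bound (\ref{UH}), so that
\begin{equation*}
|\nabla P_t f(x)|\le \frac{C}{t^{(d+1)/2}}\int_\OO \e^{-|x-y|^2/(ct)}\,|f(y)|\,\d y,
\end{equation*}
and then estimate the $y$-integral by H\"older with exponents $q$ and $q'=q/(q-1)$. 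The Gaussian factor raised to the power $q'$ integrates to $\big(\int_\OO \e^{-q'|x-y|^2/(ct)}\d y\big)^{1/q'}\le C\,t^{d/(2q')}$, so the small-time contribution is controlled by
\begin{equation*}
\int_0^1 \frac{C}{t^{(d+1)/2}}\,t^{d/(2q')}\,\d t\cdot\|f\|_{L^q}
= C\int_0^1 t^{-(d+1)/2 + d(q-1)/(2q)}\,\d t\cdot\|f\|_{L^q}.
\end{equation*}
The exponent here equals $-\tfrac12 - \tfrac{d}{2q}$, which is strictly greater than $-1$ precisely when $q>d$; this is where the hypothesis $q>d$ is used, and it is the crux of the argument. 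The integral therefore converges and produces a finite constant $c(d,q)$.

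The main obstacle I anticipate is making the dimensional constant explicit in the exact form $C_\OO^{-1}|\OO|^{(q-d)/qd}$ claimed in (\ref{e3.4}), rather than merely proving finiteness of some domain-dependent constant. To recover the sharp power of $|\OO|$ I would employ a scaling argument: replacing $\OO$ by $\lambda\OO$ rescales the heat kernel, the gradient, and the Gaussian widths in a definite way, and tracking these scalings forces the volume factor to appear with exponent $(q-d)/(qd)$ by dimensional homogeneity. The constant $C_\OO^{-1}$ should arise from the Poincar\'e/spectral-gap input (the factor $\ll_1$) governing the large-time tail, which is where the convexity and geometry of $\OO$ enter quantitatively. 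I expect the remaining steps---verifying the subordination identity on $L^q_\bot$, justifying differentiation under the integral sign, and the H\"older manipulations---to be routine once the splitting at $t=1$ and the $q>d$ convergence criterion are in place.
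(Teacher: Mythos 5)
Your argument is correct and is essentially the paper's own proof, which is only sketched there: the paper integrates the gradient heat-kernel bound (\ref{UH}) in time to obtain the Green-kernel estimate $|\nabla G_N(x,y)|\le C|x-y|^{1-d}$ (the large-time tail being handled by the spectral gap exactly as in your splitting at $t=1$) and then invokes the ``standard argument'' that convolution with $|x-y|^{1-d}$ maps $L^q$ into $L^\infty$ precisely when $q>d$ --- which is your H\"older step with the order of the $t$-integration and the $y$-integration interchanged. Like the paper, you do not actually derive the precise constant $c(d,q)C_\Omega^{-1}|\Omega|^{(q-d)/qd}$, but your proposed scaling argument is the natural way to supply it and you correctly identify $q>d$ as the crux.
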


\medskip

Note that by Theorem~\ref{th1.1},  we can   give  a simple proof of
Theorem~\ref{XX}.  Indeed, based on the gradient estimate (\ref{UH}) of $p_t$  in
Theorem~\ref{th1.1}, we have

$$
\big|\nabla G_N(x,y)\big|\leq {C\over |x-y|^{d-1}}, \ \ \ \ \ \forall x,y\in \Omega,
$$

\noindent
where $G_N $ is the Green function for the Neumann semigroup on $\Omega$, and hence by
a standard argument, estimate (\ref{e3.4}) follows readily.




\bigskip

\noindent
\subsubsection{ Estimate for the second-order derivatives for Green potential} 
Let  ${\mathbb{G}}_N$ be  the Neumann Green operator
for the Neumann problem in (\ref{e3.7}).
   Note that  $L^2$-boundedness of the mappings
$$
f\mapsto{\partial^2{\mathbb{G}}_N(f)\over\partial x_i\partial x_j},
\qquad 1\leq i, j\leq d,
$$
\noindent
    has been known since the mid 1970's (\cite{Gr}), but optimal
$L^p$ estimates, valid in the range $1<p\leq 2$, have only been proved
in the 1990's by Adolfsson and D. Jerison \cite{AJ}.
 It should be mentioned that the
aforementioned $L^p$ continuity of two derivatives on Green potentials may fail
in the class of Lipschitz domains for any $p\in(1,\infty)$ and in the class
of convex domains for any $p\in(2,\infty)$ (see  \cite{AJ}
for counterexamples; recall that every
convex domain is Lipschitz).

  A natural question is to study the regularity of the Neumann Green operator
when
the $L^p$-scale is replaced by the scale of Hardy spaces, $H^p$, for $0<p\leq 1$.
Recently, X. Duong, S. Hofmann, D. Mitrea, M. Mitrea and the second named  author of
  this article gave
a  solution to the conjecture made by D.-C. Chang, S. Krantz and E.M. Stein(\cite{CKS})
regarding the regularity of Green operators for the   Neumann
problems on $h^p_r(\Omega)$ and $h^p_z(\Omega)$, respectively, for all
${d\over d+1}<p\leq 1$, and this range of $p's$ is sharp (see \cite{DHMMY}).

\medskip

\begin{thm}[\cite{DHMMY}]\label{th3.4}
Let $\Omega$ be a compact, simply connected,  convex domain in
$\RN$ and recall that ${\mathbb{G}}_N$ stands for the
Green operator associated with the inhomogeneous Neumann problem
$(\ref{e3.7})$. Then the operators
\begin{eqnarray}\label{e3.8}
{\partial^2{\mathbb{G}}_N\over\partial x_i\partial x_j},\qquad i,j=1,\dots,d,
\end{eqnarray}
\noindent originally defined on
$\{f\in C^{\infty}(\OO):\,\int_{\Omega}f\,\d x=0\}$, extend
as bounded linear mappings from $h^p_{z}(\Omega)$ to $h^p_r(\Omega)$
whenever ${d/(d+1)}<p\leq 1$.

If $p=1,$ then the operators
${\partial^2{\mathbb{G}}_N\over \partial x_i\partial x_j}$,
$i,j=1,\dots, d,$ are also of weak type $(1,1)$. Hence by interpolation,
they can be extended to bounded operators on $L^p(\Omega)$ for $1<p\leq 2$.
\end{thm}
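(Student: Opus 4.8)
The plan is to realise each of the operators $T_{ij}:=\ff{\pp^2\mathbb{G}_N}{\pp x_i\pp x_j}$ as a Calder\'on--Zygmund type singular integral adapted to $\OO$, and then to test it against the atomic decomposition of $h^p_z(\OO)$. On the subspace of zero-mean functions one has the semigroup representation $\mathbb{G}_N f=\int_0^\infty P_t f\,\d t$, so that $T_{ij}$ has Schwartz kernel $K_{ij}(x,y)=\pp_{x_i}\pp_{x_j}G_N(x,y)$ with $G_N(x,y)=\int_0^\infty\big(p_t(x,y)-|\OO|^{-1}\big)\,\d t$. The first step is to upgrade the gradient estimate $(\ref{UH})$ to a second-order heat-kernel bound, $|\nn^2 p_t(x,y)|\le Ct^{-(d+2)/2}\exp[-|x-y|^2/(ct)]$, by combining $(\ref{UG})$ with the analyticity of $P_t$ and the short-time estimate $(\ref{H})$; integrating in $t$ (with the spectral gap controlling large $t$) then yields the size and H\"ormander estimates
\beq\label{czbd}
|K_{ij}(x,y)|\le \ff{C}{|x-y|^d},\qquad |\nn_yK_{ij}(x,y)|\le \ff{C}{|x-y|^{d+1}},
\end{equation}
so that $K_{ij}$ is a standard Calder\'on--Zygmund kernel. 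The $L^2(\OO)$-boundedness of $T_{ij}$ is classical \cite{Gr}.

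With $(\ref{czbd})$ in hand the argument splits according to $p$. For $p=1$ I would prove weak type $(1,1)$ by the usual route: a Calder\'on--Zygmund decomposition of $f\in L^1(\OO)$ at height $\aa$, controlling the good part by $L^2$-boundedness and the bad part by the H\"ormander regularity in $(\ref{czbd})$ and the mean-zero property of the bad pieces; Marcinkiewicz interpolation against the $L^2$ bound then gives strong $(p,p)$ boundedness for $1<p\le 2$. For $\ff{d}{d+1}<p\le 1$ I would argue through atoms. Since $d(1/p-1)<1$ in this range, an $h^p_z(\OO)$-atom $a$ supported in $Q\cap\OO$ needs only the vanishing-mean condition, so a single Taylor step is available: writing $T_{ij}a(x)=\int_{Q\cap\OO}\big(K_{ij}(x,y)-K_{ij}(x,y_Q)\big)a(y)\,\d y$ off $2Q$ and inserting $(\ref{czbd})$ produces the tail decay $|x-y_Q|^{-(d+1)}$, while the part over $2Q$ is handled by $L^2$-boundedness and H\"older's inequality. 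This shows that $T_{ij}a$ is, up to a uniform constant, an $h^p_r(\OO)$-molecule; summing over the atomic decomposition (legitimate via Theorem \ref{th3.1} and the adapted atomic theory) gives the boundedness $h^p_z(\OO)\to h^p_r(\OO)$. The requirement that the tail $|x-y_Q|^{-(d+1)}$ be $L^p$-summable over $\OO\setminus 2Q$ is exactly $(d+1)p>d$, i.e.\ the sharp lower threshold $p>\ff{d}{d+1}$.

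The hard part will be the boundary. Away from $\pp\OO$ the estimates above are essentially interior and Euclidean, but atoms of $h^p_z(\OO)$ may be centred at or straddle $\pp\OO$, where the second derivatives of the Neumann heat kernel behave worst and where the vanishing-mean condition interacts with the reflecting condition $Np_t|_{\pp\OO}=0$. It is precisely here that convexity---through the contraction $|X_t^x-X_t^y|\le|x-y|$ underlying $(\ref{UH})$---and simple connectedness are needed, both to secure $(\ref{czbd})$ uniformly up to the boundary and to guarantee that $T_{ij}a$, after restriction to $\OO^\circ$, still satisfies the size and cancellation conditions defining an $h^p_r(\OO)$-molecule. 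Verifying these boundary molecular estimates, and thereby matching the zero-extension space $h^p_z(\OO)$ to the restriction space $h^p_r(\OO)$, is the crux of the proof and the place where the sharpness of the range $\ff{d}{d+1}<p\le 1$ is decided.
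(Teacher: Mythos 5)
First, a point of order: the paper does not prove Theorem \ref{th3.4}; it quotes it from \cite{DHMMY} and says only that the proof rests on ``estimates for singular integrals with \emph{non-smooth} kernels,'' an optimal on-diagonal heat kernel bound, and the gradient estimate (\ref{UH}). That phrase is not decoration --- it points at exactly the place where your proposal breaks. Your whole argument is routed through the claim that $K_{ij}(x,y)=\pp_{x_i}\pp_{x_j}G_N(x,y)$ is a \emph{standard} Calder\'on--Zygmund kernel, obtained by upgrading (\ref{UH}) to a Gaussian bound on the full Hessian $\nn_x^2 p_t(\cdot,y)$. That upgrade is not available, and the resulting kernel bounds are false on general convex domains. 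Analyticity of $P_t$ together with (\ref{H}) controls $\DD_x p_t(x,y)$, not the individual entries $\pp_{x_i}\pp_{x_j}p_t(x,y)$; converting the Laplacian into the Hessian requires second-order elliptic regularity up to the boundary, which a convex (hence merely Lipschitz) boundary does not supply. Concretely, for a planar convex domain with a corner of opening $\theta\in(\pi/2,\pi)$, Neumann harmonic functions behave like $r^{\pi/\theta}\cos(\pi\phi/\theta)$ near the corner, so for fixed interior $y$ the second derivatives of $G_N(\cdot,y)$ blow up like $r^{\pi/\theta-2}$ as $x$ approaches the corner while $|x-y|$ stays bounded below. Hence even your size bound $|K_{ij}(x,y)|\le C|x-y|^{-d}$ fails, let alone the H\"ormander regularity; this is the same boundary pathology behind the Adolfsson--Jerison counterexamples quoted just before the theorem. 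Your final paragraph correctly names the boundary as the crux, but deferring it does not help when the mechanism you chose is precisely the one the boundary destroys.

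Everything downstream of that false step (the Calder\'on--Zygmund decomposition for weak $(1,1)$, the one-step Taylor expansion against atoms, the counting $(d+1)p>d$ that isolates the threshold $p>d/(d+1)$) is standard and would be fine \emph{if} the kernel bounds held; the computations are not where the content of the theorem lies. The repair, and the actual route of \cite{DHMMY}, is to run the Duong--McIntosh style theory of singular integrals \emph{without} pointwise kernel regularity: one splits $T_{ij}=T_{ij}\e^{-t\DD_N}+T_{ij}(I-\e^{-t\DD_N})$ with $t$ tied to the scale of the atom or of the Calder\'on--Zygmund cube, and replaces the pointwise H\"ormander condition by integrated estimates on the kernels of these compositions. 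Those integrated estimates are exactly what the \emph{first-order} bounds of this paper --- the gradient estimate (\ref{UH}) and the on-diagonal bound (\ref{H}) --- together with the $L^2$-theory of \cite{Gr} can deliver; at no point does one need pointwise control of two derivatives of $p_t$ or of $G_N$. If you want to salvage your write-up, keep the atomic/molecular skeleton and the $p>d/(d+1)$ bookkeeping, but replace the claimed bounds on $K_{ij}$ by estimates on $T_{ij}(I-\e^{-t\DD_N})$ acting on atoms, proved from (\ref{UH}).
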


The proof of this theorem is obtained
by using  suitable estimates for singular
integrals with non-smooth kernels and an optimal on-diagonal heat kernel
estimate. We should mention that the gradient estimate (\ref{UH})   played a major role in
 the proof of this theorem, regarding the regularity of Green operators for the   Neumann
problems on $h^p_r(\Omega)$ and $h^p_z(\Omega)$, respectively, for all
${d\over d+1}<p\leq 1$.
 For the detail of the proof, we refer the reader to \cite{DHMMY}.

\vskip 1cm

 \noindent
{\bf Acknowledgments.}  L.X. Yan would like to thank
X.T. Duong, S. Hofmann, D. Mitrea and  M. Mitrea  for helpful discussions.

 \vskip 1cm


\begin{thebibliography}{99}

\bibitem  {AJ} V. Adolfsson, D. Jerison, \emph{$L\sp p$-integrability of
the second order derivatives for the Neumann problem in convex domains,}
Indiana Univ. Math. J.,   43(1994), 1123-1138.

\bibitem {ACDH} P. Auscher, T. Coulhon, X.T. Duong, S. Hofmann,
\emph{Riesz transform on manifolds and heat kernel regularity,}
  Ann. Sci. \'Ecole Norm. Sup.    37(2004), 911--957.


\bibitem {AR} P. Auscher, E. Russ, \emph{Hardy spaces and divergence
operators on strongly Lipschitz domain of ${\mathbb R}^n$,}
  J. Funct. Anal.    201(2003), 148-184.

\bibitem{BE} D. Bakry,  M. Emery,  \emph{Hypercontractivit\'e de
semi-groupes de diffusion,}   C. R. Acad. Sci. Paris. S\'er. I Math. 
  299(1984), 775--778.


\bibitem{BH90} R.F. Bass, E. P. Hsu,  \emph{The semimartingale
structure of  reflecting Brownian motion,}   Proc. Amer. Math. Soc. 
 108(1990), 1007--1010.

\bibitem{BH91} R.F. Bass, E. P. Hsu,  \emph{Some potential theory
for reflecting Brownian motion in H\"older and Lipschitz domains,}
  Ann. Probab.  19(1991), 486--508.

\bibitem{BBC} R.F. Bass, K. Burdzy, Z.-Q. Chen,  \emph{Uniqueness of
reflecting Brownian motion in lip domains,}   Ann. Inst. H. Poincar\'e
Probab. Stat.   41(2005), 197--235.

\bibitem  {CKS} D-C. Chang, S.G. Krantz,  E.M. Stein, \emph{ $H^p$ theory
on a smooth domain in ${\mathbb R}^n$ and elliptic boundary value problems,}
  J. Funct. Anal. 114(1993), 286--347.

\bibitem  {CD} T. Coulhon, X.T. Duong, \emph{Riesz transforms for
$1\leq p\leq 2$,}  Trans. Amer. Math. Soc.  351(1999), 1151--1169.


\bibitem {CS} T. Coulhon, A. Sikora, \emph{Gaussian heat kernel upper
bounds via Phragm\'en-Lindel\"of theorem,}   Proc. Lond. Math. Soc.  96(2008),
507-544.

\bibitem   {Da} E.B. Davies, \emph{ Heat kernels and spectral theory,}
Cambridge Univ. Press, 1989.

 \bibitem{DHMMY} X.T. Duong, S. Hofmann, D. Mitrea, M. Mitrea,
 L.X. Yan, \emph{Hardy spaces and regularity   for  the inhomogeneous
Dirichlet and Neumann problems.} Preprint (2010).






\bibitem {Gr} P. Grisvard, \emph{ Elliptic problems in nonsmooth doamins,}
Pitman, Boston, MA, 1985.






\bibitem{MM} S. Mayboroda, M. Mitrea,  \emph{Sharp estimates for Green
potentials on non-smooth domains,}   Math. Res. Lett.  11(2004),
481--492.

 \bibitem{M} V. Maz'ya,  \emph{Boundedness of the gradient of a solution to
 the Neumann-Laplace problem in a convex domain,}   C.R. Math. Acad. Sci. Paris 
   347(2009), 517-520.





\bibitem{W05} F.-Y. Wang, \emph{Gradient estimates and the first
Neumann eigenvalue on manifolds with boundary,} Stoch. Proc. Appl.
115(2005), 1475--1486.

\bibitem{W07}   F.-Y. Wang, \emph{Estimates of the first Neumann eigenvalue and the log-Sobolev constant on non-convex manifolds,} Math. Nachr. 280(2007), 1431--1439.


\bibitem{W10a} F.-Y. Wang, \emph{Semigroup properties for the second fundamental form,} Docum. Math. 15(2010), 543--559.

\bibitem{W10b} F.-Y. Wang, \emph{Gradient and Harnack inequalities on noncompact manifolds with boundary,}  Pacific Journal of Math. 245(2010), 185--200.

\bibitem{YW} T. Yamada, S. Watanabe,  \emph{On the uniqueness of
solutions of stochastic differential equations,}  J. Math. Kyoto
Univ.  11(1971), 155--167.



 \end{thebibliography}
\end{document}